
\documentclass[preprint,12pt]{elsarticle}
\usepackage{tikz}
\usepackage{pgfplots}
\usepackage{graphicx} \usepackage{amsthm,amsfonts,amssymb,amscd,amsmath,enumerate,verbatim,calc,graphicx,geometry}
\usepackage[all]{xy}
\usepackage{tikz}
\usetikzlibrary{decorations.fractals}
\usepackage{pgfplots}
\usepackage{hyperref}
\usepackage{amsmath}
\newtheorem{theorem}{Theorem}[section]
\newtheorem{lemma}[theorem]{Lemma}
\newtheorem{proposition}[theorem]{Proposition}
\newtheorem{corollary}[theorem]{Corollary}
\theoremstyle{definition}
\theoremstyle{definitions}
\newtheorem{definition}[theorem]{Definition}

\newtheorem{example}[theorem]{Example}
\theoremstyle{notations}

\theoremstyle{remarks}

\journal{ }

\begin{document}

\begin{frontmatter}



\title{A Discrete Topological Complexity of Discrete Motion Planning }

 
\author[]{Hadi~Hassanzada}
\ead{hadihz1994@gmail.com}
\author[]{Hamid~Torabi\corref{cor1}}
\ead{h.torabi@um.ac.ir}
\author[]{Hanieh~Mirebrahimi}
\ead{h$_{-}$mirebrahimi@um.ac.ir}
\author[]{Ameneh~Babaee}
\ead{am.babaee@mail.um.ac.ir}
\address{Department of Pure Mathematics, Ferdowsi
University of Mashhad, P.O.Box 1159-91775, Mashhad, Iran}

\author[]{~\corref{cor1}}
\ead{email}

\address{}
\cortext[cor1]{Corresponding author}

\begin{abstract}
In this paper we generalize the discrete $r$-homotopy to the discrete $(s,r)$-homotopy. Then by this notion, we introduce the discrete motion planning for robots which can move discreetly. Moreover, in this case the number of motion planning, called discrete topological complexity, required for these robots is reduced. Then we prove some properties of discrete topological complexity; For instance, we show that a discrete motion planning in a metric space $X$ exists if and only if $X$ is a discrete contractible space. Also, we prove that the discrete topological complexity depends only on the strictly discrete homotopy type of spaces.
\end{abstract}

\begin{keyword}
Discrete Topological Complexity\sep  Discrete Motion Planning\sep Discrete Homotopy
\MSC[2010]{55M30, 55P99}

\end{keyword}

\end{frontmatter}

\section{Introduction}
The motion planning of robots depends on geometric, physical and temporal constraints \citep{6}. Topological complexity of motion planning was introduced by Farber \cite{1} to investigate geometric constraints of robotic motion. We rcall from \cite{1} that the topological complexity of the motion planning is equal to the minimum number of continuous motion planning algorithms in any configuration space. Topological complexity of a path connected space $X$, denoted by $TC(X)$, is the smallest number $k$ such that the Cartesian product $X\times X$ may be covered by k open subsets $U_i$ and there are continuous motion plannings $s_i:U_i \to PX$ where $PX$ denotes the space of all continuous paths in $X$. Farber proved that a continuous motion planning $s:X \times X \to PX$ exists if and only if $X$ is a contractible space. The concept of TC is related to the  Lusternik-Schnirelman category of the topological spaces. LS category of a space is a number that counts the distance of space with contractibility. If this number is larger, the space is further away from contractibility.  LS category of a space $X$, denoted by $cat(X)$, is the smallest number $k$ such that $X$ covered by $k$ open subsets $V_i$ where inclusion map $i:V_i\to X$ is null homotopic. Farber proved that the amount of topological complexity of $X$ is between $cat(X)$ and $cat(X\times X)$. Moreover, he provided an upper bound by the topological dimension and a lower bound by the cohomology theory for topological complexity of paracompact spaces. In this paper, to reduce the amount of topological complexity for robots which can move discreetly we use the discrete motion planning instead of continuous motion planning.

Barcelo, Capraro and White introduced the discrete homotopy theory for metric spaces in \cite{2} (see also \cite{4}). Contractibility of (contractible) space induces a motion planning on $X$. By using the concept of discrete homotopy theory, it may not to define discrete contractible space for any positive scale $r$. In this paper we introduce discrete $r$-contractible space by generalizing the difinition of $r$-homotopy to $(s,r)$-homotopy to induce discrete motion planning.

 In section 1, we prove that every discrete contractible space is $r$-connected and we show that the discrete fundamental group of a discrete contractible space vanishes. In the following, we prove that a discrete motion planning on a metric space $X$ exists if and only if $X$ is a discrete contractible space. In section 2, we show that discrete TC depends on the discrete homotopy type. In section 3, we obtain some lower and upper bound for this concept by difinning discrete LS category. By studying the properties of discrete topological complexity, we also show that the discrete topological complexity is may less than the Farber's topological complexity. For the Hawaiian earring, we show that the discrete topological complexity is finite, while the topological complexites of the Hawaiian earring or other examples such as Koch's snowflake or Sierpiński's carpet are infinite.

\section{Discrete Topological Complexity }\label{sec1}
Let $(X,d_X)$ and $(Y,d_Y)$  be metric spaces and $r>0$. A function $ f:X \to Y$ is an r-Lipschitz map if $d_Y(f(x_1),f(x_2))\leq rd_X(x_1,x_2) $, for all $x_1,x_2 \in X$. The set $[m]:=\{0,\ldots m\}$ is equipped with the metric $d(a,b)=|a-b|$ and the cartesian product $X\times [m]$ is equipped with the $\ell^1$-metric. The following definition were introduced and studied in \cite{2}. The map $\gamma :[m]\to X$ is an $r$-path if $d_X (\gamma (i) , \gamma (i+1))\leqslant r$, for all $i\in [m]$. The metric space $X$ is $r$-conected if for $x,y\in X$, there is an $r$-path from $x$ to $y$. Also let $X, Y$ be metric spaces, and let $f, g : X \to Y$ be $r$-Lipschitz maps. Then $f$ and $g$ are $r$-discrete homotopic if there exists a non-negative integer $m$ and an $r$-Lipschitz map $F : X \times \{0, \ldots, m\} \to Y$ such that $F(-, 0) = f$ and $F(-, m) = g$. Also the map $F:X\times [m]\to Y$ is an $r$-homotopy map if and only if $F(-,i)$ is an $r$-Lipschitz map for every $i\in [m]$ and $F(x,-)$ is an $r$-Lipschitz map for every $x\in X$. Now we generalize this definition as follows.
\begin{definition}\label{(s,r)-homotopy}
Let $(X,d_X)$ and $(Y,d_Y)$  be metric spaces and $f,g:X\to Y$ be $s$-Lipschitz maps. Then $f$ and $g$ are $(s,r)$-homotopic if there exsists a non-negetive integer $m$ and a map $F:X\times [m]\to Y$ such that for all $x\in X$ we have $F(x,0)=f(x)$ and $F(x,m)=g(x)$ and also, for every $i\in [m]$, $F(-,i)$ is an $s$-Lipschitz map and for every $x\in X$, $F(x,-)$ is an $r$-Lipschitz map. We denote it by $F:f\simeq_{(s,r)} g$.
\end{definition}
One can verify that the $(s,r)$-homotopy  is an equivalence relation and if $s=r$ then $(s,r)$-homotopy is equivalent to $r$-discrete homotopy in the sense of  \cite{2}. Recall that a continuous motion planning is a continuous map $s:X\times X\to PX$ with $ \pi \circ s=id$ where $\pi :PX\to X\times X$ mapping every path to the pair of initial and terminal points.  Farber proved that a continuous motion planning function $s:X \times X \to PX$ exists if and only if $X$ is contractible. In the following, we present this fact for discrete motion planning by defining $r$-contractibility.
\begin{definition}\label{r-null-homotopic}\label{r-contractible}
Let $f$ be an $s$-Lipschitz map. The map $f:X\to Y$ is called $r$-null-homotopic if there is an $(s,r)$-homotopy map between $f$ and a constant map. Inparticular, a metric space $X$ is an $r$-contractible space if the $1$-Lipschitz map identity on $X$ is an $r$-null-homotopic, i.e. there is a $(1,r)$-homotopy map $F:X\times [m] \to X$ between the identity map and a constant map.
\end{definition}

In the following example we show that  the $n$-dimensional closed ball $B^n$ is an $r$-contractible space for every $r>0$.
\begin{example}
Let $B^n$ be the closed ball in $\mathbb{R}^n$ of radius $m^\prime$ centered at the origin and let $r>0$. There is an integer $m$ such that $\dfrac{m^\prime}{m}<r$. Define a map $F:‌B^n\times [m] \to B^n$ such that $F(x,i)=x-\dfrac{ix}{m}$. We show that $F$ is a $(1,r)$-homotopy between $id_{B^n}$ and the constant map $c_0=0$. For every $x\in B^n$ and $i,j \in [m]$, we have
$d(F(x,i),F(x,j))=|(x-\dfrac{ix}{m})-(x-\dfrac{jx}{m})=|\dfrac{x}{m}(j-i)|\leq \dfrac{m^\prime}{m}|j-i|\leq rd(i,j)$.
  So $F(x,-)$ is an $r$-Lipschitz map. For every $i\in [m]$ and $x,y\in B^n$, we have
$$d(F(x,i), F(y,i))=d( x-\dfrac{ix}{m},y-\dfrac{iy}{m})=|x-\dfrac{ix}{m}-y+\dfrac{iy}{m}|$$
$$=(1-\frac{i}{m})|x-y|=(1-\frac{i}{m})d(x,y)\leq d(x,y)$$

So $F(-,i)$  is a $1$-Lipschitz map. Also $F(x,0)=x$ and $F(x,m)=0$. Thus the closed ball $B^n$ is an $r$-contractible space. 
\end{example}

\begin{proposition} 
Every $r$-contractible space is $r$-connected.
\end{proposition}
\begin{proof}
Let $X$ be an $r$-contractible. Hence there is an integer $m$ and a $(1,r)$-homotopy $ F:X\times [m]\to X$ between the identity map and a constant map. Let $x,y\in X$ and
\begin{displaymath}
\gamma (i)= \left\{
\begin{array}{lr}
F(x,i)                     &      0 \leq i\leq m \\
F(y,2m-i)     &     m \leq i \leq 2m
\end{array}
\right.
\end{displaymath}
Then $\gamma $ is an $r$-path from $x$ to $y$. 
\end{proof}

Recall from \cite{2} that an $r$-loop based at $p\in X$ is an $r$-path $\gamma:[m]\to X$ such that $ \gamma(0)=\gamma(1)=p$. Consider the set of $r$-loops based at $p$, denoted by $C_r(X,p)$, with the operation of concatenation. The $(r,r)$-homotopy is equivalent to the $r$-homotopy. Moreover, the $r$-homotopy is an equivalence relation on $C_r(X,p)$  and denoted by $\simeq_r$. The discrete fundamental group of metric space $X$ at scale $r$ was defined in \cite{2} as follows: 
$$A_{1,r}(X,p):=\dfrac{C_r(X,p)}{\simeq_r}$$
Now we show that the discrete fundamental group of discrete contractible space is the trivial group.
\begin{lemma}\label{lemma}
Let $X$ be a metric space. If $X$ is $r$-contractible, then $A_{1,r}(X,x_0)$ is trivial group for all $x_0 \in X$.
\end{lemma}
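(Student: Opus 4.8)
The goal is to show that every $r$-loop based at $x_0$ is $r$-homotopic to the constant loop, so that $A_{1,r}(X,x_0)$ consists of a single class. Fix an $r$-loop $\gamma:[n]\to X$ sending the basepoint indices to $x_0$. Since $X$ is $r$-contractible, I would choose a $(1,r)$-homotopy $F:X\times[m]\to X$ with $F(-,0)=\mathrm{id}_X$ and $F(-,m)$ the constant map at some point $p\in X$. The first step is to push $\gamma$ through the contraction: set $H:[n]\times[m]\to X$, $H(j,i):=F(\gamma(j),i)$. I would then check that $H$ is an $r$-homotopy: each slice $H(-,i)=F(-,i)\circ\gamma$ is $r$-Lipschitz, being the composite of the $1$-Lipschitz map $F(-,i)$ with the $r$-Lipschitz map $\gamma$, and each track $H(j,-)=F(\gamma(j),-)$ is $r$-Lipschitz because $F$ is a $(1,r)$-homotopy; moreover $H(-,0)=\gamma$ and $H(-,m)$ is the constant loop $c_p$.

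The point requiring care is that $H$ need not be based: its restriction to the basepoint indices is the $r$-path $\delta(i):=F(x_0,i)$, which runs from $x_0$ to $p$ and is in general nonconstant (already for the ball contracting to the origin). Thus $H$ is only a \emph{free} $r$-homotopy from $\gamma$ to $c_p$. To repair this I would invoke the standard comparison between free and based loop classes, which goes through verbatim in the discrete setting: a free $r$-homotopy from a loop at $x_0$ to a loop at $p$ with basepoint track $\delta$ shows that $[\gamma]=[\delta\ast c_p\ast\overline{\delta}]$ in $A_{1,r}(X,x_0)$, where $\overline{\delta}(i)=\delta(m-i)$. Absorbing the constant factor gives $[\gamma]=[\delta\ast\overline{\delta}]$.

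It remains to see that $\delta\ast\overline{\delta}\simeq_r c_{x_0}$, i.e.\ that a discrete path followed by its reverse is $r$-null-homotopic. This is the discrete triangle identity: one either writes down the explicit ``zipping'' homotopy that collapses $\delta\ast\overline{\delta}$ inward step by step, or simply quotes from \cite{2} that $A_{1,r}(X,x_0)$ is a group in which the class of the reversed path is the inverse. Either way $[\delta\ast\overline{\delta}]$ is the identity of $A_{1,r}(X,x_0)$, so $[\gamma]$ is trivial; since $\gamma$ was an arbitrary $r$-loop at $x_0$, the group $A_{1,r}(X,x_0)$ is trivial.

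I expect the main obstacle to be exactly the basepoint sliding under the contraction: the genuine work lies in the bookkeeping needed to turn the free homotopy $H$, the conjugation-straightening by $\delta$, and the zipping homotopy for $\delta\ast\overline{\delta}$ into honest $r$-homotopies on rectangular index sets, which forces the usual padding of discrete loops by constant segments so that all domains match. None of this is deep, but it is precisely where the discrete picture departs from Farber's continuous one and where a careless argument would leave a gap.
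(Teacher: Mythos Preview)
Your proposal is correct and follows essentially the same route as the paper: both compose the given loop with the contracting homotopy to obtain a free $r$-homotopy to a constant, then repair the moving basepoint by conjugating with the track $\delta=F(x_0,-)$. The paper carries out precisely the ``bookkeeping'' you anticipate, displaying the based homotopy as an explicit staircase-padded matrix and reading off $[\delta\ast f\ast\delta^{-1}]=1$; the only minor difference is that the paper tacitly takes the contraction point to be $x_0$ itself (so that $\delta$ is already a loop), whereas you, more faithfully to the definition of $r$-contractible, allow an arbitrary contraction point $p$ and close the argument with the inverse axiom $[\delta\ast\overline{\delta}]=1$.
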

\begin{proof}
Let $x_0\in X$ and $F:X\times [m]\to X$ be a $(1,r)$-homotopy between the identity map and the constant map $c_{x_0}$ such that $F(x,0)=x_0$ and $F(x,m)=x$. Now let $f:[n] \to X$ be an $r$-loop based at $x_0$. Let $x_i=f(i)$ for all $0\leq i \leq m$. Consider the following matrix.
\begin{center}
$\begin{pmatrix}
 F(x_0,0) &F(x_1,0) & \cdots & F(x_n,0) \\

 F(x_0,1) & F(x_1,1)  & \cdots  & F(x_n,1)  \\

\vdots & \vdots & \vdots & \vdots   \\

 F(x_0,m-1) & F(x_1,m-1) &  \cdots  & F(x_n,m-1) \\

 F(x_0,m) & f(x_1,m)  & \cdots & F(x_n,m) \\
\end{pmatrix}
$
\end{center}
Every column is an $r$-path, since $F(x,-)$ is an $r$-Lipschitz map. Also every row is an $r$-path, since for $i\in [m]$ we have $F(-,i)$ is a $1$-Lipschitz map and $f$ is an $r$-path so $d(F(x_j,i),F(x_{j+1},i))\leq d(x_j,x_{j+1})\leq r$. But every row is not necessary a loop. Now consider the following matrix.
{\footnotesize
\[\begin{pmatrix}
F(x_0,0) & \cdots & F(x_0,0) & F(x_0,0) & \cdots & F(x_n,0) & F(x_0,0) & \cdots & F(x_0,0) \\

F(x_0,0) & \cdots & F(x_0,0) & F(x_0,1)  & \cdots  & F(x_n,1) & F(x_0,0)  & \cdots & F(x_0,0) \\

\vdots & \vdots & \vdots & \vdots & \vdots & \vdots & \vdots & \vdots & \vdots \\

F(x_0,0) & \cdots & F(x_0,m-2) & F(x_0,m-1) &  \cdots  & F(x_n,m-1) & F(x_0,m-2)  & \cdots & F(x_0,0) \\

F(x_0,0) & \cdots & F(x_0,m-1) & F(x_0,m)  & \cdots & F(x_n,m) & F(x_0,m-1)  & \cdots & F(x_0,0) \\
\end{pmatrix}
\]}

$F(x_0,-)$ is $r$-Lipschitz. Therefore, every column is an $r$-path. Also, every row is an $r$-loop based at $x_0$, since for $i\in [m]$ we have $F(-,i)$ is $1$-Lipschitz and note that $x_0=f(0)=f(n)=x_n$. Therefore, if $\gamma : [m] \to X$ is an $r$-path with $\gamma(i)=F(x_0,i)$, then according to the last matrix, $[\gamma \ast f \ast \gamma^{-1}]=1$.  Thus $A_{1,r}(X,x_0)$ is the trivial group.
\end{proof}
\begin{example}
The unit circle $S^1$ is $r$-contractible for $r\geq 2$ and it's not $r$-contractible for $r<2$. Let $r\geq 2$ then the map  $F:S^1\times [1] \to S^1$ such that $F(x,0)=x$ and $F(x,1)=(1,0)$  is a  $(1,r)$-homotopy between the identity map and the costant map $c_{(1,0)}$. Let $r< 2$ and $S^1$ be $r$-contractible. By Lemma \ref{lemma}, $A_{1,r}(S^1,x_0)$ is trivial group which is a contradiction by $A_{1,r}(S^1,x_0) \cong \mathbb{Z}$. Thus $S^1$ is not $r$-contractible for $r<2$.
\end{example}

For metric spaces we can define discrete motion planning as follows.
\begin{definition}\label{r-motion planning}
The discrete motion planning in $(X,d_X)$ consists of $1$-Lipschitz map $s:X\times X \to X^{[m]}$ such that $\pi \circ s=id_{X\times X}$, for some $m\in \mathbb{N} $, where $X^{[m]}$ is the set of $r$-paths $\gamma : [m] \to X$ equipped with the uniform metric and $\pi:X^{[m]} \to X\times X$ is the map with $\pi (\gamma ) = (\gamma (0) , \gamma (m))$. We call $s$ an $r$-motion planning for $X$.
\end{definition}
In continuous motion planning, the robot must go around through small and large holes to reach a specific goal. But in discrete motion planning, the holes that are smaller than the robot's step length can be easily passed. see figure \ref{Robot}.
\begin{figure}[htbp]
\begin{center}
\includegraphics[width =8cm ]{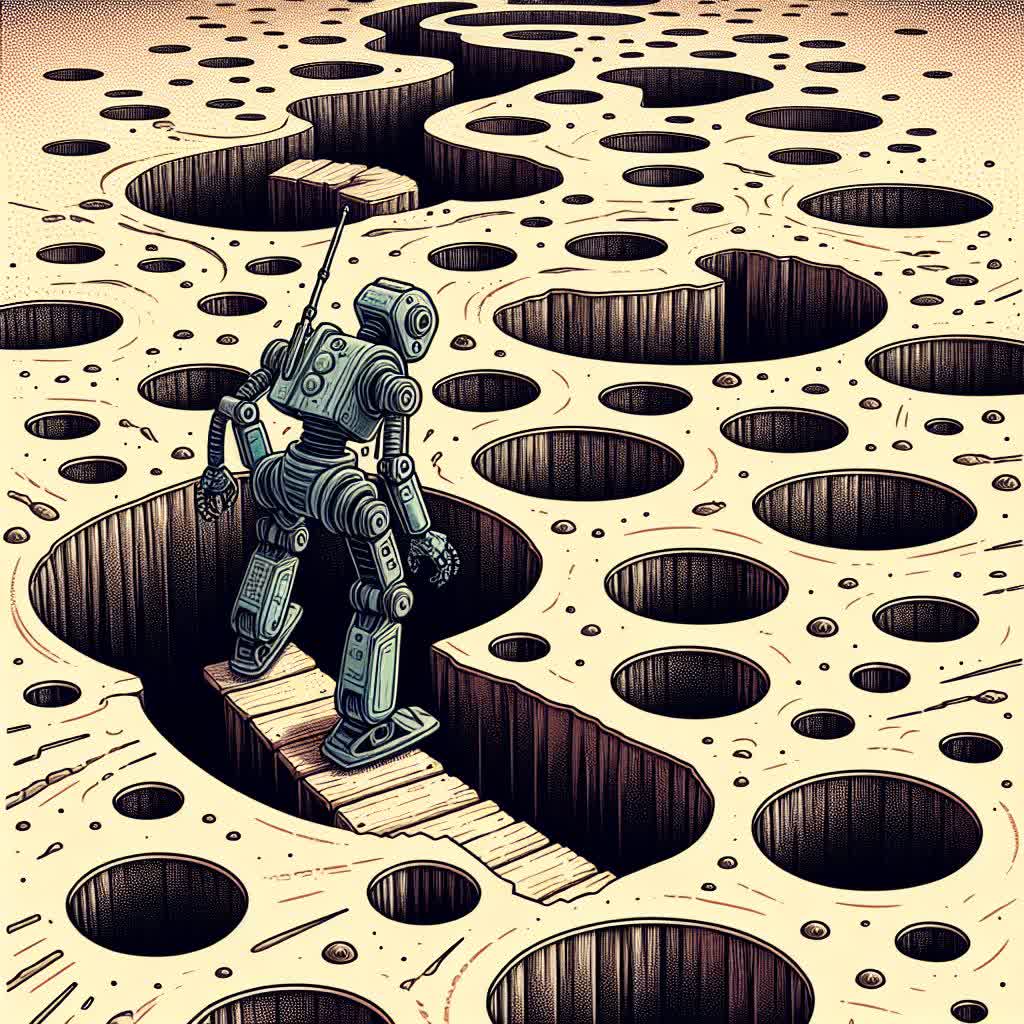}
\caption{Robot walking on the potholed path}
\label{Robot}
\end{center}
\end{figure}

Farber proved that a continuous motion planning $s: X \times X \to PX$ exists if and only if the configuration space $X$ is contractible \cite{1}. Now, we prove that a discrete motion planning exists for discrete contractible spaces.
\begin{theorem}\label{Th1}
An $r$-motion planning in a metric space $X$ exists if and only if $X$ is an $r$-contractible space.
\end{theorem}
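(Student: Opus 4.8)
The plan is to prove both implications by explicit construction, paralleling Farber's classical argument for continuous motion planning but replacing continuity throughout by the relevant Lipschitz conditions. Before starting I would record one elementary fact used in both halves: by the triangle inequality a map $\gamma:[m]\to X$ is an $r$-path if and only if it is an $r$-Lipschitz map, since $d_X(\gamma(i),\gamma(j))\le\sum_{k=i}^{j-1}d_X(\gamma(k),\gamma(k+1))\le r|i-j|$.

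For the direction assuming $X$ is $r$-contractible, I would fix $x_0\in X$ and a $(1,r)$-homotopy $F:X\times[m]\to X$ with $F(x,0)=x$ and $F(x,m)=x_0$, and then define $s:X\times X\to X^{[2m]}$ by
$$s(a,b)(i)=\begin{cases}F(a,i), & 0\le i\le m,\\ F(b,2m-i), & m\le i\le 2m.\end{cases}$$
The two branches agree at $i=m$ (both equal $x_0$), and each branch is an $r$-path because $F(a,-)$ and $F(b,-)$ are $r$-Lipschitz, so $s(a,b)\in X^{[2m]}$; moreover $\pi(s(a,b))=(s(a,b)(0),s(a,b)(2m))=(a,b)$. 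The remaining point is that $s$ is $1$-Lipschitz for the $\ell^1$-metric on $X\times X$ and the uniform metric on $X^{[2m]}$: for each index $i$ one has $d_X(s(a,b)(i),s(a',b')(i))\le d_X(a,a')$ if $i\le m$ (since $F(-,i)$ is $1$-Lipschitz) and $\le d_X(b,b')$ if $i\ge m$, so the uniform distance between $s(a,b)$ and $s(a',b')$ is at most $\max\{d_X(a,a'),d_X(b,b')\}\le d_X(a,a')+d_X(b,b')$.

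For the converse, given an $r$-motion planning $s:X\times X\to X^{[m]}$ I would fix $x_0\in X$ and define $F:X\times[m]\to X$ by $F(x,i)=s(x,x_0)(i)$. From $\pi\circ s=\mathrm{id}$ one gets $F(x,0)=x$ and $F(x,m)=x_0$, so $F$ joins $\mathrm{id}_X$ to the constant map $c_{x_0}$. For fixed $i$ the map $F(-,i)$ is $1$-Lipschitz, because $d_X(F(x,i),F(y,i))$ is at most the uniform distance between $s(x,x_0)$ and $s(y,x_0)$, which is at most $d_X(x,y)$ since $s$ is $1$-Lipschitz and the second coordinates coincide. For fixed $x$ the map $F(x,-)$ is $r$-Lipschitz, because $s(x,x_0)$ is an $r$-path and hence $r$-Lipschitz by the observation above. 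Thus $F$ is a $(1,r)$-homotopy showing that $\mathrm{id}_X$ is $r$-null-homotopic, i.e.\ $X$ is $r$-contractible.

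I do not anticipate a genuine obstacle; both constructions are essentially forced by the analogy with Farber's theorem. The two points deserving care are the identification of $r$-paths with $r$-Lipschitz maps out of $[m]$ (this is what makes $F(x,-)$ come out $r$-Lipschitz in the converse, and what makes the concatenated path a genuine $r$-path in the direct construction), and the observation that the $\ell^1$-metric on $X\times X$ dominates the maximum of the two coordinate distances, which is exactly what keeps the concatenated planner $1$-Lipschitz rather than merely $2$-Lipschitz.
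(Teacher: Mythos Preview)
Your proof is correct and follows essentially the same approach as the paper: both directions use the same constructions (concatenating $F(a,\cdot)$ with the time-reversal of $F(b,\cdot)$ to build the motion planner, and evaluating $s$ with one coordinate frozen to recover the contraction), with only the cosmetic difference that the paper freezes the first coordinate of $s$ while you freeze the second. Your explicit remark that $r$-paths are exactly $r$-Lipschitz maps out of $[m]$ makes transparent the step the paper invokes without comment.
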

\begin{proof}
Let $s:X\times X \to X^{[m]}$ be an $r$-motion planning in metric space $(X,d_X)$. Hence for $(x,y) \in X\times X$, $s(x,y)$ is an $r$-path from $x$ to $y$. Fix $a\in X$, define $F:X\times [m] \to X$ such that $F(x,i)=s(a,x)(i)$. Therefore, $F(x,0)=s(a,x)(0)=a$ and $F(x,m)=s_{(a,x)}(m)=x$. Let  $i\in [m]$. Since $s$ is a $1$-Lipschitz map, for every $x, x^\prime \in X$,
\begin{align*}
d_X(F(x,i), F(x^\prime , i))&=d_X(s{(a,x)}(i) , s{(a,x^\prime )}(i))\\
& \leq Max \{d_X(s{(a,x)}(j),s{(a,x^\prime )}(j))|j\in [m]\}\\&=d_{X^{[m]}}(s(a,x),s(a,x^\prime )) \\
& \leq d_{X\times X}((a,x),(a,x^\prime) )\\
&=d_X(x,x^\prime )
\end{align*}
Now let $x \in X$. Since $s(a,x)$ is an $r$-path, for every $i\in [m]$,
 $$d_X(F(x,i) , F(x,i^\prime)) = d_X(s(a,x)(i),s(a,x)(i^\prime )) \leq rd_X(i,i^\prime ).$$
 Then F is a $(1,r)$-homotopy between the identity map and the constant map $c_a$. Therefore $F:c_a\simeq_{(1,r)} id_X$.
 
Conversely, let $X$ be an $r$-contractible space. So there is a $(1,r)$-homotopy $F:X\times [m]\to X$ such that $F: id_X \simeq_{(1,r)} c_a$. We show that an $r$-motion planning in $X$ exists. For every $(x,y)\in X\times X$, let $s(x,y):[2m]\to X$ be an $r$-path from $x$ to $y$ such that 
\begin{displaymath}
s_{(x,y)}(i)= \left\{
\begin{array}{lr}
F(x,i)                     &      0 \leq i\leq m \\
F(y,2m-i)     &     m \leq i \leq 2m
\end{array}
\right.
\end{displaymath}
Now we show that $s:X\times X\to X^{[2m]}$ with $s(x,y)=s_{(x,y)}$ is a $1$-Lipschitz map. For every $(x , y) , (x^\prime , y^\prime ) \in X \times X$, we have
$$d_{X^{[2m]}}(s_{(x,y)} , s_{(x^\prime , y^\prime )}) = Max\{d(s_{(x,y)}(i),s_{(x^\prime ,y^\prime )}(i))|i\in [2m]\}$$
 We have two cases;
 \begin{itemize}
\item[(1)]
$0\leq i\leq m$, we have $d_X(s_{(x,y)}(i) , s_{(x^\prime ,y^\prime )}(i))= d_X(F(x,i), F(x^\prime , i ))\leq d_X(x,x^\prime ).$
\item[(2)]
$m\leq i \leq 2m$, we have $ d_X(s_{(x,y)}(i) , s_{(x^\prime ,y^\prime )}(i))= d_X(F(y,i), F(y^\prime , i ))\leq d_X(y,y^\prime ).$
 \end{itemize}
Thus $d_X(s_{(x,y)} , s_{(x^\prime ,y^\prime )})\leq d_{X\times X}((x,y), (x^\prime ,y^\prime ))$. Therefore $s$ is an $r$ motion planning. 
\end{proof}
By Theorem 2.8 we conclude that the space $B^n$ has an $r$-motion planning.
\begin{definition}\label{TC_r}
Let $(X,d_X)$ be a metric space. We define the discrete topological complexity of the discrete motion planning in $X$ as the minimal number $l$ such that the product $X\times X$ covered by $l$ open subsets $U_1, U_2, \ldots, U_l $ such that for every $i\in \{1,\ldots ,l\}$, there exists an $r$-motion planning $s_i : U_i \to X^{[m]}$ with $\pi \circ s_i= id_{U_i}$ for some $m \in \mathbb{N}$. In this case we set $TC_{r}(X)=l$ and if no such $l$ exists, then we set $TC_{r}(X)=\infty$.
\end{definition}
It is easy to see that if for every $1\leq i \leq l$, $s_i : U_i \to X^{[m_i]}$ is an $r$-motion planning, then for every $1\leq i \leq l$, there exists an $r$-motion planning $\overline{s}_i : U_i \to X^{[m]}$, where $m=Max\{m_1, m_2, ... m_l\}$.\\
By theorem \ref{Th1}, $TC_{r}(X)=1$ if and only if $X$ is $r$-contractible, so $TC_r(B^n)=1$.
\begin{example}
For unit circle $S^1$ we show that
\begin{displaymath}
TC_{r}(S^1)= \left\{
\begin{array}{lr}
1   &      r\geq 2 \\
2   &      r<2.
\end{array}
\right.
\end{displaymath}
For all $r\geq 2$, $S^1$ is an $r$-contractible space, then $TC_{r}(S^1)=1$. If $r<2$, then $S^1$ is not $r$-contractible, so $TC_r(S^1)\geq 2$. Now we show that  $TC_r(S^1)\leq 2$. Let $U_1=\{(a,b)\in S^1\times S^1 | a\neq b\}$ and $ U_2=\{(a,b)\in S^1\times S^1 | a\neq -b\}$. Hence $S^1\times S^1=U_1\cup U_2$ and $U_1$ ,$U_2$ are open in $S^1\times S^1$. Let $a,b\in U_1$, so $a=e^{a_1 i}$ and $b=e^{b_1 i}$. Let $s_1:U_1\to X^{[m]}$ be a map such that $m\geq \frac{2\pi}{r}$ and $s_1(a,b)= \gamma$ where
$\gamma(j)=e^{(a_1+\frac{j}{m}(b_1 - a_1))i}$.

We show that $\gamma$ is an $r$-path from $a$ to $b$. $\gamma (0)= a$ , $\gamma (m)= b$ and for every $0\leq j \leq m-1$,
 $$d(\gamma (j) , \gamma(j+1))= d(e^{(a_1+\frac{j(b_1 - a_1)}{m})i} , e^{(a_1+\frac{(j+1)(b_1 - a_1)}{m})i})$$
$$  = d(e^{(a_1+\frac{j(b_1 - a_1)}{m})i} , e^{(a_1+\frac{j(b_1 - a_1)}{m})i}\times e^{\frac{(b_1 - a_1)i}{m}}) \leq \frac{b-a}{m} = r$$
Now we prove that $s_1$ is $1$-Lipschitz i.e. for every $(a,b),(a^\prime ,b^\prime)\in U_1$ 
$ d(s_1(a, b) ,s_1(a^\prime , b^\prime)) \leq d((a,b),(a^\prime ,b^\prime))$. Let $j\in \{0,1,\cdots, m\}$, we have
\begin{align*}
d(s_1(a,b)(j),s_1(a^\prime , b^\prime)(j))
&=d(e^{(a_1+\frac{j(b_1-a_1)}{m})i},e^{(a_1^\prime + \frac{j(b_1^\prime - a_1^\prime)}{m})i})\\
&=d(e^{(\frac{jb_1}{m}+\frac{(m-j)a_1}{m})i},e^{(\frac{jb_1^\prime}{m}+\frac{(m-j)a_1^\prime}{m})i})\\
&\leq d(e^{(\frac{jb_1}{m}+(\frac{m-j}{m})a_1)i},e^{(\frac{jb_1}{m}+(\frac{m-j}{m})a_1^\prime)i})\\
&+d(e^{(\frac{jb_1}{m}+(\frac{m-j}{m})a_1^\prime)i},e^{(\frac{jb_1^\prime}{m}+(\frac{m-j}{m})a_1^\prime)i})\\
&= d(e^{\frac{m-j}{m}a_1i}, e^{\frac{m-j}{m}a_1^\prime i})+d(e^{\frac{j}{m}b_1i},e^{\frac{j}{m}b_1^\prime i}) \\
&\leq d(e^{a_1i},e^{a_1^\prime i})+d(e^{b_1i},e^{b_1^\prime i})\\
&=d(a,a^\prime)+d(b,b^\prime)\\
&=d((a,b),(a^\prime , b^\prime))
\end{align*}
An $r$-motion planning over $U_2$ is given by the map $s_2:U_2\to X^{[m]}$ which for every $(a, b)\in U_2$, $s_2(a, b)$ is the shortest counterclockwise $r$-path between $a$ and $b$.
\end{example}
\begin{example}
Let $X=S^1\vee S^1$ and $r<2$, then we show that $TC_r(X) = 2$. For this it is enough to show that $TC_r(S^1\vee S^1)\leq 2$. Let $V_1\in (S^1\vee S^1) \times (S^1\vee S^1)$ such that $V$ be union of $U_1$ in the last example for $S_1$ and $U^\prime _1$ from the last example for another $S^1$ except two element $[(1,0),(-1,0)]$ and $[(-1,0),(1,0)]$. The discrete motion planning $\sigma_1 : V_1\to  (S^1\vee S^1)^{[m]} $ be the last discrete motion plannin $s_1:U_1\to X^{[m]}$  if both element were in the same circle otherwise first, we apply the map $s$ on the first point and the connecting point of the two circles, and then we apply the map $s$ on the connecting point of the two circles and the second point, and consider the connection of these two paths. We can make $V_2$  based on $U_2$ in the same way as before. Thus $TC_r(S^1\vee S^1)= 2$. By a similar way it can be shown that if $X=S^1\vee \cdots \vee S^1$, $n$ times, then $TC_r(X)=2$.

\end{example}
\begin{example}
Let $\mathbb{HE}$ be the Hawaiian earring space \ref{Hawaiian earring} i.e. $\mathbb{HE}=\bigcup_{n\in \mathbb{N}} \{(x,y)\in \mathbb{R}^2|(x-\frac{1}{n})^2+y^2=\frac{1}{n^2}\}$. If $r<2$, then  $TC_r(\mathbb{HE})=2$. Similar to the previous example we show that $TC_r(\mathbb{HE})\leq 2$. We make $U_1,U_2 \in \mathbb{HE}\times\mathbb{HE}$ with the help of the previous example with the condition that the circles are infinite. We also consider two motion planning $s_1,s_2$ ,which are similar to the previous motion planinngs.
 Thus for $r<2$ we have $TC_r(\mathbb{HE})=2$. 
\end{example}
Also Koch snowflake and Sierpiński carpet have finite discrete topological complexity like Hawaiian earring.
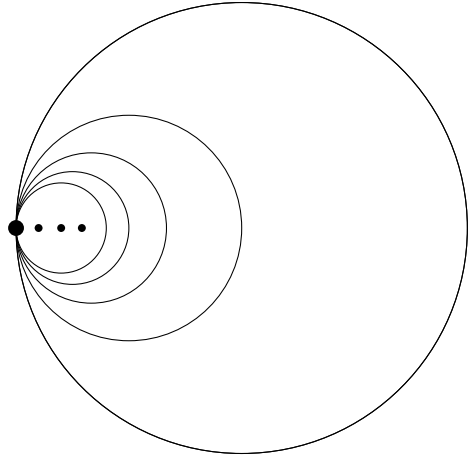
\begin{figure}
\begin{center}

\begin{tikzpicture}[scale=3]
    \draw (1,0) circle (1);
     Draw the remaining circles
    \foreach \n in {1,...,5} {
        \draw ({1/(\n)},0) circle ({1/(\n)});
    }
    \fill (0,0) circle (1pt);
    \fill (7/24,0) circle (0.5pt);
    \fill (2/10,0) circle (0.5pt);
    \fill (2/20,0) circle (0.5pt);
\end{tikzpicture}
\caption{Hawaiian earring}
    \label{Hawaiian earring}

\end{center}
\end{figure}

In the discrete case we can compare the topological complexity for different scalars.
\begin{corollary}
If $r^\prime > r > 0$, then $TC_r(X)\geq TC_{r^\prime }(X)$.
\end{corollary}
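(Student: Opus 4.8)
The plan is to observe that enlarging the scale only enlarges the class of admissible paths, so every piece of data witnessing $TC_r(X)=l$ automatically witnesses $TC_{r'}(X)\le l$. First I would record the elementary fact that if $r'>r>0$, then every $r$-path $\gamma:[m]\to X$ is also an $r'$-path, since $d_X(\gamma(i),\gamma(i+1))\le r<r'$ for all $i\in[m]$. Writing $X^{[m]}_r$ and $X^{[m]}_{r'}$ for the path spaces appearing in Definition \ref{r-motion planning} at the two scales (each carrying the uniform metric), this says that the set-theoretic identity induces an inclusion $X^{[m]}_r\hookrightarrow X^{[m]}_{r'}$, and this inclusion is an isometry for the uniform metric and commutes with the endpoint maps $\pi$.

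Next I would dispose of the trivial case: if $TC_r(X)=\infty$ there is nothing to prove, so assume $TC_r(X)=l<\infty$. Fix an open cover $X\times X=U_1\cup\cdots\cup U_l$ together with $r$-motion plannings $s_i:U_i\to X^{[m]}_r$ satisfying $\pi\circ s_i=\mathrm{id}_{U_i}$, where we may take a common $m$ for all $i$ by the remark following Definition \ref{TC_r}. Composing each $s_i$ with the isometric inclusion $X^{[m]}_r\hookrightarrow X^{[m]}_{r'}$ yields maps $s_i':U_i\to X^{[m]}_{r'}$ which are still $1$-Lipschitz (composition of a $1$-Lipschitz map with an isometry) and still satisfy $\pi\circ s_i'=\mathrm{id}_{U_i}$. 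Hence each $s_i'$ is an $r'$-motion planning over $U_i$, so the very same open cover witnesses $TC_{r'}(X)\le l=TC_r(X)$, as desired.

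There is no genuine obstacle here; the proof is essentially the monotonicity of the notion ``$r$-path'' in $r$. The only point deserving a sentence of care is that the target path space in Definition \ref{r-motion planning} itself depends on the scale, so one must check that the natural inclusion of the smaller path space into the larger one preserves the $1$-Lipschitz condition and intertwines the projections $\pi$ — both of which are immediate since the inclusion is an isometry and leaves initial and terminal points unchanged.
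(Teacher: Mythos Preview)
Your proof is correct and follows essentially the same approach as the paper: both arguments rest on the observation that every $r$-path is an $r'$-path, so any $r$-motion planning over an open set $U$ is automatically an $r'$-motion planning over $U$, and hence any cover witnessing $TC_r(X)=l$ also witnesses $TC_{r'}(X)\le l$. Your write-up is in fact more careful than the paper's, since you explicitly address the dependence of the target space $X^{[m]}$ on the scale and check that the inclusion is an isometry compatible with $\pi$.
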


\begin{proof}
Let $s :U \to X^{[k]}$ be an $r$-motion planing. We know every $r$-path in $X$ is an $r^\prime$-path in $X$. Therefore, $s : U  \to X^{[k]}$ is an $r$-motion planning thus $TC_r(X)\geq TC_{r^\prime }(X)$.
\end{proof}

\section{Discrete Homotopy Invariance}\label{sec2}
 To define homotopy invariance in the discrete state, we need to introduce two spaces that have the same discrete homotopy type.
\begin{definition}
Two space $X$ and $Y$ have the same $r$-homotopy type if there are $r_1$-Lipschitz map $f:X\to Y$ and $r_2$-Lipschitz map $g:Y\to X$ such that $r_1r_2\leq 1$ and $f\circ g \simeq_{(1,r)} id_Y$ , $g\circ f\simeq_{(1,r)} id_X$. Moreover, if $r_1=1=r_2$, then we call $X$ and $Y$ have the same strictly $r$-homotopy type.
\end{definition}

\begin{proposition}
A space $X$ has the same $r$-homotopy type as a point if and only if $X$ is $r$-contractible. 
\end{proposition}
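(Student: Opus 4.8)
The strategy is to observe that, when the model space is taken to be a single point $\{*\}$, the condition ``$X$ has the same $r$-homotopy type as a point'' unwinds almost verbatim into the statement that $id_X$ is $(1,r)$-homotopic to a constant map, which is precisely $r$-contractibility in the sense of Definition~\ref{r-contractible}. So the plan is to prove the two implications by directly manipulating the relevant composites, with no real machinery involved.

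For the forward direction I would start from an $r_1$-Lipschitz map $f:X\to\{*\}$ and an $r_2$-Lipschitz map $g:\{*\}\to X$ with $r_1r_2\le 1$, $f\circ g\simeq_{(1,r)}id_{\{*\}}$ and $g\circ f\simeq_{(1,r)}id_X$. The composite $g\circ f:X\to X$ is the constant map $c_{g(*)}$, so the second relation is literally a $(1,r)$-homotopy between $id_X$ and a constant map; hence $X$ is $r$-contractible. No constraint on $r_1,r_2$ is actually used here, since $id_X$ is $1$-Lipschitz and constant maps are $1$-Lipschitz, so the homotopy data is automatically of the type demanded by Definition~\ref{r-contractible}.

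For the converse I would take the $(1,r)$-homotopy $F:X\times[m]\to X$ with $F(-,0)=id_X$ and $F(-,m)=c_a$ supplied by $r$-contractibility, set $Y=\{*\}$, let $f:X\to\{*\}$ be the unique map, and let $g:\{*\}\to X$ be given by $g(*)=a$. Both $f$ and $g$ are $1$-Lipschitz (indeed non-expanding, because any map into or out of a one-point metric space is), so one may take $r_1=r_2=1$, whence $r_1r_2=1\le 1$. Now $f\circ g=id_{\{*\}}$ is $(1,r)$-homotopic to itself via the constant homotopy (with $m=0$), while $g\circ f=c_a$, so $F$ exhibits $id_X\simeq_{(1,r)}g\circ f$; since $\simeq_{(1,r)}$ is an equivalence relation it is in particular symmetric, giving $g\circ f\simeq_{(1,r)}id_X$. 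Thus $X$ and the point have the same $r$-homotopy type, and in fact the same strictly $r$-homotopy type.

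I do not anticipate any genuine obstacle: the argument is essentially definition-chasing. The only points worth flagging explicitly in the write-up are that maps to and from a one-point metric space are automatically $1$-Lipschitz, so the multiplicative constraint $r_1r_2\le 1$ is vacuous in this situation, and that symmetry of the $(1,r)$-homotopy relation is what is needed to turn the homotopy $F$ around in the converse direction.
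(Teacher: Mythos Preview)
Your proposal is correct and follows essentially the same approach as the paper: in both directions it is definition-chasing, identifying $g\circ f$ with a constant map in the forward direction and choosing $f,g$ to be the obvious maps to and from the one-point space in the converse. Your write-up is, if anything, slightly more careful than the paper's in making explicit that maps to and from a point are $1$-Lipschitz and in invoking symmetry of $\simeq_{(1,r)}$.
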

\begin{proof}
Let $X$ and one point space $\{a\}$ have the same $r$-homotopy type. There are $r_1$-Lipschitz map  $f:X \to \{a\}$, and $r_2$-Lipschitz map $g:\{a\} \to X$ with $g(a)=x_0\in X$  such that $r_1 r_2\leq 1$ and $f\circ g \simeq_{(1,r)} id_{\{a\}}$ and $g\circ f\simeq_{(1,r)} id_X$. For all $x\in X$, $g\circ f(x)=g(a)=x_0$ so $g\circ f$ is constant. Hence $id_X$ and the constant map $g\circ f$ are $r$-homotopic.
Conversely, let X be an $r$-contractible space. Then there is a $(1,r)$-homotopy $F:X\times [m]\to X$ between the identity map $id_X$ and the constant map $c_{x_0}$. Define $f:X\to \{x_0\}$ such that $f(x)=x_0$ and $g: \{x_0\} \to X$ such that $g(x_0)= x_0$. We have $f,g$ are $1$-Lipschitz. Also, $f\circ g(x_0)=id_{\{x_0\}}$  so $f\circ g\simeq_{(1,r)} id_{\{x_0\}}$ and for $x\in X$,  $g\circ f(x)= g(x_0) \simeq_{(1,r)} id_X$. Thus $X$ and one point space $\{x_0\}$ have the same $r$-homotopy type.
\end{proof}
\begin{theorem}
Let $f:X\to Y$ be an $r_1$-Lipschitz map and $g:Y\to X$ be an $r_2$-Lipschitz map such that $r_1r_2\leq 1$ , $ f\circ g  \simeq_{(1,r)} id_Y$ and $g\circ f \simeq_{(1,r)} id_X$. Then $TC_r(Y)\leq TC_{\frac{r}{r_1}}(X)$ and $TC_r(X)\leq T_{\frac{r}{r_2}}(Y)$.
\end{theorem}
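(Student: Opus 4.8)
The plan is to imitate Farber's proof of the homotopy invariance of topological complexity, adapted to the discrete Lipschitz setting. I will establish $TC_r(Y)\leq TC_{r/r_1}(X)$; the inequality $TC_r(X)\leq TC_{r/r_2}(Y)$ then follows by interchanging the roles of $(X,f)$ and $(Y,g)$, using the homotopy $g\circ f\simeq_{(1,r)} id_X$ in place of $f\circ g\simeq_{(1,r)} id_Y$ (the relevant product $r_1r_2$ is symmetric, so the same bound applies). I may assume $TC_{r/r_1}(X)=k<\infty$, and fix an open cover $X\times X=U_1\cup\cdots\cup U_k$ with $(r/r_1)$-motion plannings $s_i:U_i\to X^{[m]}$ for a common $m$ (legitimate by the remark after Definition~\ref{TC_r}). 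I would also fix a $(1,r)$-homotopy $G:Y\times[m']\to Y$ with $G(-,0)=id_Y$ and $G(-,m')=f\circ g$, reversing the time direction of the given homotopy if needed.

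Since $g$ is $r_2$-Lipschitz, hence continuous, the sets $V_i:=(g\times g)^{-1}(U_i)$ form an open cover of $Y\times Y$ by $k$ sets. For $(y,y')\in V_i$ I would define $\sigma_i(y,y'):[2m'+m]\to Y$ as the concatenation of three pieces: the path $j\mapsto G(y,j)$ from $y$ to $fg(y)$; the path $j\mapsto f\big(s_i(g(y),g(y'))(j)\big)$ from $fg(y)$ to $fg(y')$; and the path $j\mapsto G(y',m'-j)$ from $fg(y')$ back to $y'$. One checks the endpoints of consecutive pieces agree, the outer pieces are $r$-Lipschitz in $j$ because $G(x,-)$ is $r$-Lipschitz, and the middle piece is $r$-Lipschitz in $j$ because $s_i(g(y),g(y'))$ is an $(r/r_1)$-path and $f$ is $r_1$-Lipschitz, so $r_1\cdot (r/r_1)=r$. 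Thus $\sigma_i(y,y')$ is a genuine $r$-path and $\pi\circ\sigma_i=id_{V_i}$.

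The main point is to show that each $\sigma_i:V_i\to Y^{[2m'+m]}$ is $1$-Lipschitz for the uniform metric, which I would reduce to a coordinatewise estimate of $d_Y(\sigma_i(y,y')(j),\sigma_i(z,z')(j))$. On the first and third blocks the bound $d_Y(y,z)$ resp. $d_Y(y',z')$ is immediate since $G(-,j)$ is $1$-Lipschitz. On the middle block one chains the inequalities: $f$ is $r_1$-Lipschitz, $s_i$ is $1$-Lipschitz into the uniform metric on $X^{[m]}$, and $g$ is $r_2$-Lipschitz, which yields $d_Y(\cdot,\cdot)\leq r_1 r_2\big(d_Y(y,z)+d_Y(y',z')\big)\leq d_{Y\times Y}((y,y'),(z,z'))$, the hypothesis $r_1r_2\leq 1$ being exactly what is needed. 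I expect this chaining — together with bookkeeping of the concatenation indices and the orientation of $G$ — to be the only delicate part; once it is carried out, the family $\{(V_i,\sigma_i)\}_{i=1}^{k}$ is a discrete $r$-motion planning cover of $Y\times Y$, giving $TC_r(Y)\leq k=TC_{r/r_1}(X)$.
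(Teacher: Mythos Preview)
Your proposal is correct and follows essentially the same route as the paper: pull back the cover of $X\times X$ along $g\times g$, and on each $V_i$ define $\sigma_i(y,y')$ as the concatenation of the homotopy track from $y$ to $fg(y)$, the image under $f$ of the $X$-motion-planning path between $g(y)$ and $g(y')$, and the reversed homotopy track from $fg(y')$ to $y'$. The three-case verification of the $1$-Lipschitz condition (with the middle block using $r_1r_2\le 1$) and the $r$-path check (with the middle block using $r_1\cdot (r/r_1)=r$) are exactly the paper's argument.
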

\begin{proof}
Let $X$ and $Y$ have the same $r$-homotopy type, so  there are $r_1$-Lipschitz map $f:X\to Y$ and $r_2$-Lipschitz map $g:Y\to X$ such that $r_1r_2\leq 1$ and $F:Y\times [m]\to Y$ be a $(1,r)$-homotopy between $f\circ g$ and $id_Y$ and $G$ be a $(1,r)$-homotopy between $g\circ f$ and $id_X$. We show that $TC_r(Y)\leq TC_{\frac{r}{r_1}}(X)$. Let $TC_{\frac{r}{r_1}}(X)= l$, so there is an open cover $\{U_i\}_1^l$ for $X$, such that for every $0\leq i \leq l$ there exists an ${\frac{r}{r_1}}$-motion planning $s_i: U_i \to X^{[k]}$ for $U_i$. Suppose that $V_i=(g\times g)^{-1}(U_i)$. Since $g$ is an $r_2$-Lipschitz map, so $g\times g$ is continuous, so   $V_i$ is open in $Y\times Y$ and $\{V_i\}_1^l$ is an open cover for $Y\times Y$. Let $1\leq i\leq l$, we define an $r$-motion planning $\sigma_i:V_i \to Y^{[2m+k]}$, where  $\sigma_i(y,z)= \gamma_y^{-1}\ast \gamma_{yz} \ast \gamma_z $ such that for every $y\in Y$, $\gamma_y(j)=F(y,j)$, $0 \leq j\leq m$ and for every $(y,z)\in V_i$, $\gamma_{yz} (j) = f(s_i(g(y),g(z))(j))$, $0 \leq j\leq k$. It's clear to see that $\sigma_i(y,z)$ is an $r$-path from $y$ to $z$. We show that $\sigma_i $ is a $1$-Lipschitz map. Note that
$$d(\sigma_i (y,z),\sigma_i (y^\prime , z^\prime ) ) = d(\gamma_y^{-1}\ast \gamma_{yz} \ast \gamma_z , \gamma_{y^\prime}^{-1} \ast \gamma_{y^\prime z^\prime} \ast \gamma _{z^\prime})$$
$$=Max \{ d_Y((\gamma_y^{-1} \ast \gamma_{yz} \ast \gamma_z )(j) , (\gamma_{y^\prime}^{-1} \ast \gamma_{y^\prime z^\prime} \ast \gamma _{z^\prime}) (j))| j\in [2m+k] \}$$

we have three cases;

\begin{itemize}
\item[(1)] For every $0\leq j\leq m$, $d_Y(\gamma _y^{-1}(j) , \gamma _{y^\prime}^{-1}(j))=d(F(y,m-j),F(y^\prime , m-j))\leq d(y,y^\prime)\leq d((y,z), (y^\prime,z^\prime))$
\item[(2)] For every $0 \leq j \leq k$,
\begin{align*}
d_Y(\gamma_{yz} (j) , \gamma_{y^\prime z^\prime} (j))&= d_X(f(s_i(g(y),g(z))(j)),f(s_i(g(y^\prime ) , g(z^\prime ))(j))\\
&\leq r_1d(s_i(g(y),g(z))(j),s_i(g(y^\prime ) g(z^\prime)(j) \\
&\leq r_1d(s_i(g(y),g(z)),s_i(g(y^\prime),g(z^\prime)))\\
&\leq r_1 d((g(y),g(z)),(g(y^\prime),g(z^\prime)))\\
&= r_1((d(g(y),g(y^\prime))+d(g(z),g(z^\prime)))\\
&\leq r_1r_2(d(y,y^\prime)+d(z,z^\prime))\\
&\leq d((y,z), (y^\prime,z^\prime))
\end{align*}

\item[(3)] For every $0 \leq j \leq k$, we have $d_Y(\gamma _y(j) , \gamma _{y^\prime} (j)=d(F(z,j),F(z^\prime , j))\leq d(z,z^\prime)\leq d((y,z), (y^\prime,z^\prime))$
\end{itemize}
Then $TC_r(Y)\leq TC_{\frac{r}{r_1}}(X)$. Similarly, we can show that $TC_r(X)\leq TC_{\frac{r}{r_2}}(Y)$.	
\end{proof}

From the Theorem 3.3 we have the following result.

\begin{corollary}
If $X$ and $Y$ have the same strictly $r$-homotopy type and $r>0$, then $TC_r(X)\ = TC_r(Y)$.
\end{corollary}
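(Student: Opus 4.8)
The plan is to derive the corollary directly from Theorem~3.3 by applying it in both directions with $r_1 = r_2 = 1$. Since $X$ and $Y$ have the same strictly $r$-homotopy type, by definition there are $1$-Lipschitz maps $f : X \to Y$ and $g : Y \to X$ with $f \circ g \simeq_{(1,r)} id_Y$ and $g \circ f \simeq_{(1,r)} id_X$, and the product condition $r_1 r_2 = 1 \leq 1$ is automatically satisfied. Thus the hypotheses of Theorem~3.3 hold with $r_1 = r_2 = 1$.

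First I would invoke Theorem~3.3 with these data to obtain $TC_r(Y) \leq TC_{r/r_1}(X) = TC_{r/1}(X) = TC_r(X)$. Next, noting that the pair $(g, f)$ witnesses the same strictly $r$-homotopy type with the roles of $X$ and $Y$ interchanged (again with both Lipschitz constants equal to $1$), I would apply Theorem~3.3 a second time — or simply use the second conclusion $TC_r(X) \leq TC_{r/r_2}(Y)$ of that theorem — to get $TC_r(X) \leq TC_{r/1}(Y) = TC_r(Y)$. Combining the two inequalities yields $TC_r(X) = TC_r(Y)$.

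There is essentially no obstacle here: the corollary is the special case $r_1 = r_2 = 1$ of Theorem~3.3, and the only thing to check is that the subscripts $r/r_1$ and $r/r_2$ collapse to $r$ when $r_1 = r_2 = 1$, which is immediate. The one point worth stating carefully is why the symmetric conclusion is available — either one reads it off directly from the ``Similarly'' clause at the end of the proof of Theorem~3.3, or one observes that ``having the same strictly $r$-homotopy type'' is a symmetric relation, so the first inequality applied with $X$ and $Y$ swapped already gives the reverse inequality. I would write the proof in two lines along exactly these lines.

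\begin{proof}
By hypothesis there are $1$-Lipschitz maps $f : X \to Y$ and $g : Y \to X$ with $r_1 = r_2 = 1$, so that $r_1 r_2 \leq 1$, $f \circ g \simeq_{(1,r)} id_Y$ and $g \circ f \simeq_{(1,r)} id_X$. Applying Theorem~3.3 gives $TC_r(Y) \leq TC_{r/r_1}(X) = TC_r(X)$ and $TC_r(X) \leq TC_{r/r_2}(Y) = TC_r(Y)$. Hence $TC_r(X) = TC_r(Y)$.
\end{proof}
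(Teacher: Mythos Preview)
Your proposal is correct and matches the paper's approach: the corollary is stated immediately after Theorem~3.3 as a direct consequence, and your argument is precisely the intended specialization to $r_1 = r_2 = 1$. There is nothing to add.
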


\section{ Upper and lower Bound for $TC_r(X)$} \label{sec3}
Recall from \cite{3} that $cat(X)$ is defined as the smallest integer $k$ such that $X$ may be covered by $k$ open subsets $V_1 \cup\cdots\cup V_k = X$ with each inclusion $V_i \hookrightarrow X$ null-homotopic (see also \cite{7, 8}). Now we define discrete category for metric space. we need to define discrete categorical subset.
\begin{definition}
A subset $A$ from metric space $X$ is an $r$-categorical if the inclusion map $i: A \to X$ is an $r$-null homotopy.
\end{definition}

\begin{example}
Let $r>0$ and $n>1$. There is an integer $m$ such that $\dfrac{1}{m}<r$. Define a map $F:‌S^{n-1} \times [m] \to D^n$ such that $F(x,i)=x-\dfrac{ix}{m}$. Hence, $F$ is a $(1,r)$-homotopy between the inclusion map $i: ‌S^{n-1} \to D^n$ and the zero map. Therefore, the inclusion map $i$ is an $r$-null homotopy, which implies that $‌S^{n-1}$ is an $r$-categorical in $D^n$. Moreover, if we remove some sufficiently small balls from $D^n$, then $‌S^{n-1}$ is an $r$-categorical in the remaining space.
\end{example}
\begin{center}
\begin{tikzpicture}
    \def\n{5}
    \def\radius{1.5}
    \foreach \i in {1,...,\n} {
        \draw (0,0) circle (\i*\radius/\n);
    }
    \foreach \i in {1,...,\n} {
        \pgfmathsetmacro\label{\n+1-\i}
        \node[below] at (0.2,-\i*\radius/\n) {};
    }
    \draw[->,thick] (1,1) -- (0.2,0.2);
    \node[above right]at (0.9,0.9) {$F(x,i)$};
    \draw[->] (-2,0) -- (2,0) node[right] {};
        \draw[->] (0,-1.8) -- (0,1.8) node[above]{} ;
   \draw[fill=black] (0.45,0) circle (0.1);
      \draw[fill=black] (0.75,0.2) circle (0.1);
         \draw[fill=black] (-1,-0.9) circle (0.1);
       \draw[fill=black] (-1,0.9) circle (0.1);
          \draw[fill=black] (1,-0.9) circle (0.1);
          \draw[fill=black] (-0.75,-0.2) circle (0.1);
   \fill (0,0) circle (1pt);
   \
\end{tikzpicture}
\end{center}
\begin{definition}
The $r$-category of metric spase $X$ is the smallest integer $k$ such that An $r$-categirical open cover $\{v_i\}_{i=1}^k$ on $X$ exists. We denote $cat_r(X)=k$ and otherwise $cat_r(X)=\infty$.
\end{definition}
\begin{corollary}
If $X$ is an $r$-contractible space, then $cat_r(X)=1$.
\end{corollary}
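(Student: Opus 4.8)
The plan is to derive this directly from Corollary~\ref{TC_r}'s companion fact, namely that $TC_r(X)=1$ precisely when $X$ is $r$-contractible, combined with the observation that an $r$-motion planning over the whole of $X\times X$ gives, in particular, an $r$-null-homotopy of the identity. More elementarily, I would argue from the definition of $cat_r$: if $X$ is $r$-contractible, then by Definition~\ref{r-contractible} there is a $(1,r)$-homotopy $F:X\times[m]\to X$ between $id_X$ and a constant map $c_{x_0}$. I must produce an $r$-categorical open cover of $X$ of size $1$, i.e.\ exhibit a single open set, namely $X$ itself, such that the inclusion $X\hookrightarrow X$ is an $r$-null-homotopy.

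First I would take $V_1=X$, which is trivially open in $X$ and covers $X$. Then the inclusion map $i:V_1\to X$ is exactly $id_X$, which is a $1$-Lipschitz map. The homotopy $F$ provided by $r$-contractibility is precisely a $(1,r)$-homotopy from $id_X=i$ to the constant map $c_{x_0}$, so by Definition~\ref{r-contractible} (the definition of $r$-null-homotopic for a $1$-Lipschitz map) the inclusion $i$ is $r$-null-homotopic. Hence $\{V_1\}$ is an $r$-categorical open cover of $X$ of cardinality $1$, which forces $cat_r(X)\le 1$. Since $cat_r(X)$ is by Definition the smallest integer $k$ for which such a cover exists, and $k=0$ is impossible for a nonempty space (an empty cover cannot cover $X$), we conclude $cat_r(X)=1$.

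There is essentially no obstacle here: the statement is a direct unwinding of the two definitions, and the only thing to be careful about is the bookkeeping that $X$ itself is admissible as a member of a categorical cover and that the $(1,r)$-homotopy witnessing contractibility of $id_X$ is literally the data required to witness $r$-null-homotopy of the inclusion $X\hookrightarrow X$. If one wants to be fully pedantic, I would also remark that $cat_r(X)\ge 1$ whenever $X\neq\varnothing$, so that the bound $cat_r(X)\le 1$ is sharp; this mirrors the convention used for $TC_r$ in Definition~\ref{TC_r}. I expect the whole proof to be three or four lines.
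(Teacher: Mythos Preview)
Your argument is correct and is exactly the intended one: the paper states this corollary without proof, treating it as an immediate consequence of the definitions, and your unwinding (take $V_1=X$ and use the $(1,r)$-homotopy from $r$-contractibility as the required $r$-null-homotopy of the inclusion) is precisely that.
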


Now we explain relation between $TC_r(X)$ and $cat_r(X)$.
\begin{theorem}
If $X$ is a metric space, then $cat_r(X)\leq TC_r(X)$. Moreover $TC_r(X) \leq cat_r(X\times X)$ if $X$ is $r$-connected.
\end{theorem}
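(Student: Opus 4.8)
The plan is to transcribe Farber's classical argument that $\mathrm{cat}(X)\le TC(X)\le\mathrm{cat}(X\times X)$ into the discrete setting, keeping careful track of the Lipschitz constants; the key structural fact used throughout is that $X\times X$ carries the $\ell^1$-metric, which is exactly what lets one pass between a hypothesis on a map into $X\times X$ and the corresponding per-coordinate statements, and that the metric on $X^{[m]}$ is the uniform (sup) metric.

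\emph{First inequality, $cat_r(X)\le TC_r(X)$.} Put $l=TC_r(X)$ and fix an open cover $X\times X=U_1\cup\cdots\cup U_l$ together with $1$-Lipschitz $r$-motion plannings $s_i\colon U_i\to X^{[m_i]}$, $\pi\circ s_i=\mathrm{id}_{U_i}$. Fix a basepoint $a\in X$ and set $U_i'=\{x\in X:(a,x)\in U_i\}$. Since $x\mapsto(a,x)$ is $1$-Lipschitz, each $U_i'$ is open in $X$, and since $\{a\}\times X$ is covered by the $U_i$ the sets $U_i'$ cover $X$. Define $F_i\colon U_i'\times[m_i]\to X$ by $F_i(x,j)=s_i(a,x)(j)$; then $F_i(x,0)=a$ and $F_i(x,m_i)=x$, so $F_i$ is a homotopy between the constant map $c_a$ and the inclusion $U_i'\hookrightarrow X$. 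For fixed $j$ we get $d_X(F_i(x,j),F_i(x',j))\le d_{X^{[m_i]}}(s_i(a,x),s_i(a,x'))\le d_{X\times X}((a,x),(a,x'))=d_X(x,x')$, since the metric on $X^{[m_i]}$ is the uniform one and $s_i$ is $1$-Lipschitz; for fixed $x$, $s_i(a,x)$ is an $r$-path, hence $d_X(F_i(x,j),F_i(x,j'))\le r|j-j'|$. Thus $F_i$ is a $(1,r)$-homotopy, so each $U_i'$ is $r$-categorical and $\{U_i'\}_{i=1}^l$ is an $r$-categorical open cover of $X$, giving $cat_r(X)\le l$.

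\emph{Second inequality, $TC_r(X)\le cat_r(X\times X)$ when $X$ is $r$-connected.} Put $k=cat_r(X\times X)$ and fix an $r$-categorical open cover $X\times X=W_1\cup\cdots\cup W_k$. For each $j$, an $r$-null-homotopy of the inclusion yields a $(1,r)$-homotopy $H_j\colon W_j\times[m_j]\to X\times X$ with $H_j(w,0)=w$ and $H_j(w,m_j)=(a_j,b_j)$ constant; write $H_j=(H_j^1,H_j^2)$. Because of the $\ell^1$-metric, $1$-Lipschitzness of $H_j(-,i)$ forces each $H_j^\ell(-,i)$ to be $1$-Lipschitz, and $r$-Lipschitzness of $H_j(w,-)$ forces each $H_j^\ell(x,y,-)$ to be an $r$-path. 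Since $X$ is $r$-connected, pick an $r$-path $\delta_j\colon[m_j']\to X$ from $a_j$ to $b_j$, and for $(x,y)\in W_j$ define $\sigma_j(x,y)$ to be the concatenation of $H_j^1(x,y,\cdot)$ (an $r$-path $x\to a_j$), then $\delta_j$ (an $r$-path $a_j\to b_j$), then $H_j^2(x,y,m_j-\cdot)$ (the reversed $r$-path $b_j\to y$); this is an $r$-path from $x$ to $y$, and at the two junctions the consecutive values agree, so the $r$-path condition is not violated there. Checking $1$-Lipschitzness of $\sigma_j$ segment by segment: on the first segment $d_X(H_j^1(x,y,i),H_j^1(x',y',i))\le d_{X\times X}((x,y),(x',y'))$, on the middle segment the two paths coincide, and on the last segment the same bound holds with $H_j^2$; taking the maximum gives $d(\sigma_j(x,y),\sigma_j(x',y'))\le d_{X\times X}((x,y),(x',y'))$. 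Hence each $\sigma_j\colon W_j\to X^{[2m_j+m_j']}$ is a $1$-Lipschitz $r$-motion planning with $\pi\circ\sigma_j=\mathrm{id}_{W_j}$, and after normalising all codomains to a common length (the remark following Definition~\ref{TC_r}) we conclude $TC_r(X)\le k$.

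\emph{Main obstacle.} The only delicate part is the second inequality: one must exploit that the product metric is $\ell^1$ to decouple the hypotheses on $H_j$ into per-coordinate statements, verify that concatenation and reversal preserve being an $r$-path and that the $1$-Lipschitz estimate survives the junction indices, and invoke $r$-connectedness together with the length-normalisation remark to make the concatenation into a legitimate single $r$-motion planning of fixed length. The first inequality is a routine restriction-to-a-slice argument.
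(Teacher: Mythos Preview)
Your proposal is correct and follows essentially the same argument as the paper: slice by a fixed basepoint to get the first inequality, and for the second split the $(1,r)$-null-homotopy of each categorical set into its two coordinates and splice in a fixed $r$-path between the two limit points, exactly as you do. The only cosmetic differences are that the paper normalises to a common path length $m$ at the outset rather than at the end, and it does not spell out the role of the $\ell^1$ product metric as explicitly as you do.
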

\begin{proof}
Let $TC_r(X)=l$. There are open subsets $U_1 , \cdots , U_l$ in $X\times X$ such that $X\times X= U_1\cup \cdots \cup U_l$ and  for every $1\leq i\leq l$ there is an $r$-motion planning $s_i:U_i\to X^{[m]}$. Let $x_0\in X$. For $i\in \{1,\cdots ,l\}$, since $U_i$ is open in $X\times X$, the subset $(\{x_0\}\times X)\cap U_i$ is open in $\{x_0\}\times X$. So there is an open set $V_i$ in $X$ such that $ \{x_0\}\times V_i= U_i\cap (\{x_0\}\times X)$. where $V_i=\{x\in X |(x_0,x)\in U_i\}$. Since $ s_i|_{U_i\cap (\{x_0\}\times X)}$ is an $r$-motion planning for $U_i\cap ( \{x_0\}\times X)$, and  $U_i\cap (\{x_0\}\times X)=\{x_0\}\times V_i$, consider $F_i:V_i \times [m]\to X$ by $F_i(x,j)=s_i|_{\{x_0\}\times V_i}(x_0,x)(j)$. So $F(x,0)=s_i|_{\{x_0\}\times V_i}(x_0,x)(0)=x_0$ and  $F_i(x,m)=s_i|_{\{x_0\}\times V_i}(x_0,x)(m)=x$. Now we show that $F$ is a $(1,r)$ homotopy between the inclusion map and a constant map. For $j\in [m]$ , let $x_1 , x_2\in X$. Since $s_i$ is $1$-Lipschitz, \begin{align*}
d(F_i(x_1,j),F_i(x_2,j))&=d(s_i(x_0,x_1)(j),s_i(x_0,x_2)(j))\\
&\leq d(s_i(x_0,x_1),s_i(x_0,x_2))\\
&\leq d((x_0,x_1), (x_0,x_2))\\&=d(x_1,x_2)
\end{align*}
Also for every $x\in X$, let $j,j^\prime \in [m]$. Since $s_i(x_0,x)$ is an $r$-path,
$$ d(F_i(x,j),F_i(x,j^\prime ))= d(s_i(x_0,x)(j),s_i(x_0,x)(j^\prime))\leq rd(j,j^\prime )$$
Since $\{U_i\}_{i=1}^l$ is a cover for $X\times X$, we have
$$\{x_0\}\times \bigcup_{i=1}^lV_i =\bigcup_{i=1}^l(\{x_0\}\times V_i)= \bigcup_{i=1}^l(U_i\cap \{x_0\}\times X)=(\bigcup_{i=1}^lU_i)\cap (\{x_0\}\times X)=\{x_0\}\times X.$$
So $X=\bigcup_{i=1}^lV_i$. Therefore, $\{V_i\}_{i=1}^l$ is a $r$-categorical cover for $X$, which implise that $cat_r(X)\leq l$.

Let $X$ be $r$-connected and $cat_r(X\times X)=k$. There is an $r$-categorical open cover $\{V_i\}_{i=1}^k$ for $X\times X$. Let $i \in \{ 1, 2, ..., k  \}$. Since $V_i\subseteq X\times X$ is an $r$-contractible, so there is $(x_0,x_0^\prime)\in X\times X$ and $F: V_i\times [m] \to X\times X$ such that $F$ is a $(1,r)$-homotopy between the inclusion map $V_i\hookrightarrow X\times X$ and the constant map $C_{(x_0,x_0^\prime)}$. Hence $F((x_1,x_2),0)=(x_1,x_2)$ and $F((x_1,x_2),m)=(x_0,x_0^\prime)$ for every $(x_1,x_2)\in X\times X$. Consider $F=(F_1,F_2)$ where $F_1:V_i\times [m] \to X$  is the first component of $F$ and $F_2:V_i\times [m] \to X$ is the second component of $F$. Hence $F_1((x_1,x_2),0)=x_1 , F_1((x_1,x_2),m)=x_0$ $F_2((x_1,x_2),0)=x_2 , F_2((x_1,x_2),m)=x_0^\prime $. Now define $s:V_i\to X^{[2m+k]}$ by  
\begin{displaymath}
s(x_1,x_2)(j)= \left\{
\begin{array}{lr}
F_1((x_1,x_2),j)                     &      0 \leq j\leq m \\
\gamma (j-m)                        &         m \leq j\leq m+k \\
F_2((x_1,x_2),2m+k-j)     &     m+k \leq j \leq 2m+k
\end{array}
\right.
\end{displaymath}
where $\gamma : [k]\to X$ is an $r$-path from $x_0$ to $x_0^\prime$.
We show that $s:V_i\to X^{[2m+k]}$ is an $r$-motion planning. For every $(x_1,x_2)\in V_i$ , $s(x_1,x_2)(0)=F_1((x_1,x_2),0)=x_1$ , $s(x_1,x_2)(2m+k)=F_2((x_1,x_2),0)=x_2$. Also $s(x_1,x_2)$ is an $r$-path since $F_1 ((x_1,x_2),m)=x_0=\gamma (0)$, $\gamma (k)=x_0^\prime=F_2((x_1,x_2),m)$ and $\gamma$ is an $r$-path and $d(F_t((x_1,x_2),j+1),F_t((x_1,x_2),j))\leq d(F((x_1,x_2),j+1),F((x_1,x_2),j))\leq r$ for every $t\in \{1,2\}$ , $j\in \{0,m-1\}$. Also $s$ is a $1$-Lipschitz map, since for every $(x_1,x_2),(x_1^\prime,x_2^\prime)\in V_i$, $t\in \{1,2\}$ and $j \in \{ 0, 1, ...,m \}$ we have
\begin{align*}
d(F_t((x_1,x_2),j),F_t((x_1^\prime,x_2^\prime),j))&\leq d(F((x_1,x_2),j),F((x_1^\prime,x_2^\prime),j))\\&\leq d((x_1,x_2),(x_1^\prime,x_2^\prime)).
\end{align*}
\end{proof}













\end{document}